\theoremstyle{plain}
\newtheorem{theorem}{Theorem}
\theoremstyle{definition}
\theoremstyle{remark}
\newtheorem{remark}{Remark}
\DeclareMathOperator{\polylog}{polylog}
\newcommand{\eps}{\epsilon}
\newcommand{\bbm}{\begin{bmatrix}}
\newcommand{\ebm}{\end{bmatrix}}
\newcommand{\pq}{\frac{p}{q}}
\newcommand{\wpwq}{\frac{1-p}{1-q}}
\newcommand{\hot}{\text{h.o.t.}}
\begin{document}

\title[Low-rank Structure]{Hierarchical Low-rank Structure of Parameterized Distributions}

\author[]{Jun Qin}
\address[Jun Qin]{Target Corporation, Sunnyvale, CA, 94086}
\email{jun.qin@target.com}

\author[]{Lexing Ying}
\address[Lexing Ying]{Department of Mathematics and ICME, Stanford University, Stanford, CA 94305}
\email{lexing@stanford.edu}

\thanks{The work of L.Y. is partially supported by the U.S. Department of Energy, Office of Science,
  Office of Advanced Scientific Computing Research, Scientific Discovery through Advanced Computing
  (SciDAC) program and also by the National Science Foundation under award DMS-1818449.}


\begin{abstract}
  This note shows that the matrix forms of several one-parameter distribution families satisfy a
  hierarchical low-rank structure. Such families of distributions include binomial, Poisson, and
  $\chi^2$ distributions. The proof is based on a uniform relative bound of a related divergence
  function. Numerical results are provided to confirm the theoretical findings.
\end{abstract}

\maketitle

\section{Introduction}\label{sec:intro}

This note is concerned with the matrix or operator form $f(x,\lambda)$ of a one-parameter
distribution family indexed by the parameter $\lambda$. Such objects have long been considered in
Bayesian statistics \cite{gelman2013bayesian,wasserman2013all,ghosal2017fundamentals}. More
recently, these matrices have played an important role in estimating distributions of distributions
(also called fingerprints) \cite{valiant2013estimating,tian2017learning} and computing functionals
of unknown distributions from samples
\cite{jiao2015minimax,paninski2003estimation,wu2016minimax}. When solving these problems, the
computation often requires solving linear systems and optimizations problems associated with these
matrices and operators.

In this note, we prove that, for several one-parameter family of distributions, including binomial,
Poisson, and $\chi^2$ distributions, $f(x,\lambda)$ exhibits a hierarchical low-rank
structure. Roughly speaking, when viewed as a two-dimensional array, the off-diagonal blocks of
$f(x,\lambda)$ are numerically low-rank, i.e., for a fixed accuracy $\eps$, the numerical rank is
bounded by a poly-logarithmic function of $1/\eps$. Such a structure ensures optimal complexity
while approximating these matrices or performing basic linear algebra operations such as
matrix-vector multiplications. In order to demonstrate the existence of such low-rank
approximations, we first prove a new relative bound for a related divergence function, which might
be of independent interest.

Similar hierarchical low-rank properties have been demonstrated for integral kernels
\cite{rokhlin1985rapid,greengard1988rapid,greengard1991fast,bebendorf2003existence,hackbusch2015hierarchical}
related to partial differential equations. For those kernels, the difficulty comes from the
singularity along the diagonal. For the problems considered in this note, the location of the
singularity is often near the boundary of the matrix/operator and thus the proof technique is quite
different.

The rest of the note is organized as follows. Section \ref{sec:rela} proves a relative bound of a
related divergence function. Section \ref{sec:lowrank} discusses the hierarchical low-rank structure
of the negative exponentials of the divergence functions. Finally in Section \ref{sec:dist} extends
this result to parameterized distributions, including the binomial, Poisson, and $\chi^2$ squared
distributions.

\section{A relative bound for a divergence function} \label{sec:rela}


Consider the divergence function
\[
E(p||q) \equiv p \ln(p/q) - (p-q)
\]
for $0\le p,q < \infty$, which is convex and positive away from $p=q$. Let us first focus on the
square $(p,q)\in (1,2)\times(0,1)$.

\begin{theorem}\label{thm:Elower}
  For any $M>0$, define $p_M$ and $q_M$ as follows:
  \begin{itemize}
  \item $q_M<1$ is the value such that $E(1||q_M)=\ln 1/q_M - (1-q_M)=M$.
  \item $p_M=\min(2,p')$ where $p'>1$ is the number such that $E(p'||1)=p' \ln p' - (p'-1) = M$.
  \end{itemize}
  There exists a uniform constant $C>0$ such that for any $M>0$
  \[
  \frac{E(p_M||q_M)}{M}<C.
  \]  
\end{theorem}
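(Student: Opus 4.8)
The plan is to reduce the statement to a closed-form expression for $E(p_M||q_M)$ by eliminating the logarithms through the two defining equations, and then to bound that expression by $CM$ in two regimes of $M$. Expanding $E(p_M||q_M) = p_M\ln p_M - p_M\ln q_M - p_M + q_M$ and using the defining relation for $q_M$, namely $-\ln q_M - 1 + q_M = M$ (so $\ln q_M = q_M - 1 - M$), eliminates the $\ln q_M$ term and yields the identity
\[
E(p_M||q_M) = p_M\ln p_M + M\,p_M - q_M(p_M - 1).
\]
This is the key simplification: the dependence on $q_M$ is now affine, and only the horizontal quantity $p_M\ln p_M$ remains implicit.

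Next I would split according to whether the cap in $p_M = \min(2,p')$ is active, i.e.\ whether $M$ exceeds the threshold $M_0 := E(2||1) = 2\ln 2 - 1$. For $M \ge M_0$ the value $p_M$ is frozen at $2$, and the identity reads $E(p_M||q_M) = 2\ln 2 + 2M - q_M < 2\ln 2 + 2M$, so that $E(p_M||q_M)/M < 2 + (2\ln 2)/M \le 2 + (2\ln 2)/M_0$. This regime is immediate precisely because $p_M$ no longer varies.

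For $0 < M < M_0$ we have $p_M = p'$, and its defining equation gives $p_M\ln p_M = M + p_M - 1$. Substituting this into the identity collapses it to
\[
E(p_M||q_M) = M(1 + p_M) + (p_M - 1)(1 - q_M),
\]
whence $E(p_M||q_M)/M = (1 + p_M) + (p_M-1)(1-q_M)/M$. Since $1 < p_M \le 2$, the first term is at most $3$, and the whole difficulty is concentrated in the second term as $M \to 0$, where both factors $p_M - 1$ and $1 - q_M$ vanish.

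Controlling that product is the main obstacle, and I would resolve it by showing the apparent blow-up is only apparent: each factor scales like $\sqrt{M}$, so the two square roots cancel the $M$ in the denominator. Concretely, I would establish the elementary quadratic lower bounds $E(1||q) = q - 1 - \ln q \ge (1-q)^2/2$ for $q\in(0,1)$ (the difference is decreasing and vanishes at $q=1$, since its derivative equals $-(1-q)^2/q$) and $E(p||1) = p\ln p - p + 1 \ge (p-1)^2/4$ for $p\in(1,2)$ (a second-order Taylor expansion about $p=1$, using $h''(p)=1/p > 1/2$). These give $1 - q_M \le \sqrt{2M}$ and $p_M - 1 \le 2\sqrt{M}$, hence $(p_M-1)(1-q_M)/M \le 2\sqrt{2}$ uniformly in $M$. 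Combining the two regimes then yields an explicit admissible constant, for instance $C = \max\!\big(3 + 2\sqrt 2,\ 2 + (2\ln 2)/M_0\big)$.
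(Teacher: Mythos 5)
Your proof is correct, and it takes a genuinely different route from the paper. The paper argues by continuity of the ratio $E(p_M||q_M)/M$ in $M$ and computes its two limits via Taylor expansion (the ratio tends to $4$ as $M\to 0$ and to $2$ as $M\to\infty$), concluding uniform boundedness without producing a constant. You instead eliminate $\ln q_M$ (and, when the cap is inactive, $p_M\ln p_M$) through the defining equations, obtaining the exact identity $E(p_M||q_M) = M(1+p_M) + (p_M-1)(1-q_M)$ for $M < 2\ln 2 - 1$, and then control the potentially singular term $(p_M-1)(1-q_M)/M$ by the elementary quadratic lower bounds $E(1||q)\ge (1-q)^2/2$ and $E(p||1)\ge (p-1)^2/4$, each of which checks out (the derivative computation $-(1-q)^2/q$ and the bound $h''(t)=1/t\ge 1/2$ on $(1,2]$ are both correct), giving $1-q_M\le\sqrt{2M}$ and $p_M-1\le 2\sqrt M$ and hence the uniform bound $2\sqrt 2$; the capped regime $M\ge 2\ln 2 -1$ is immediate from the same identity with $p_M=2$. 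What your approach buys: it is fully non-asymptotic, avoids the paper's somewhat informal reliance on higher-order-term bookkeeping and on the (unstated) verification that the ratio extends continuously with finite limits at both ends, and it yields an explicit constant $C=\max\bigl(3+2\sqrt2,\ 2+2\ln 2/(2\ln 2-1)\bigr)\approx 5.83$. What the paper's approach buys instead is the sharp asymptotic behavior of the ratio at both extremes ($4$ and $2$), which your bounds do not recover, though your constant is not far off. One cosmetic point: your regime bounds are non-strict as stated, so to get the strict inequality of the theorem you should either note that equality in the quadratic bounds occurs only at $M=0$ (hence your estimates are strict for $M>0$) or simply enlarge $C$ slightly.
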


\begin{figure}[h!]
  \centering
  \includegraphics[scale=0.42]{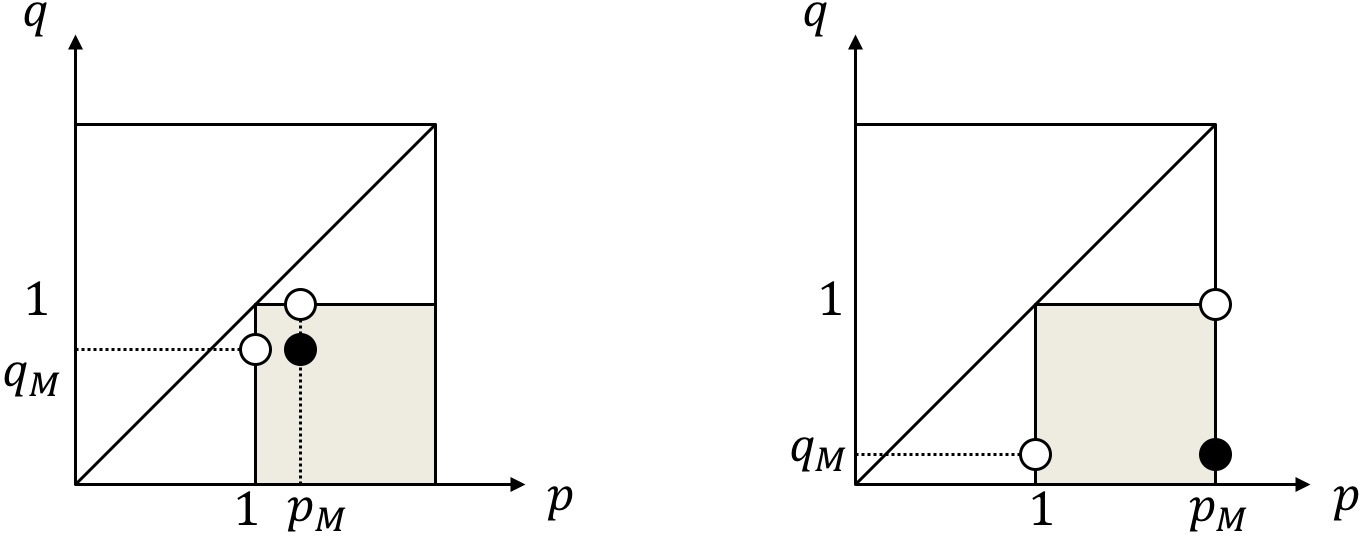}
  \caption{Locations of $p_M$ and $q_M$ for the case $(p,q)\in(1,2)\times(0,1)$.  Left: small
    $M$. Right: large $M$. }
  \label{fig:Elower}
\end{figure}

\begin{proof}
  Since the ratio $E(p_M||q_M)/M$ depends continuously on $M$, in order to show it is bounded by a
  uniform constant, it is sufficient to show that the ratio $E(p_M||q_M)/M$ has a finite limit as
  $M$ goes to zero and to infinity.
  
  When $M$ goes to zero, the Taylor approximation of $E(p||q)$ near $p=1$ and $q=1$ is valid. The
  first order derivatives of $E(p||q)$ are
  \[
  E_p = \ln p-\ln q, \quad  E_q = -p/q + 1,
  \]
  while the second order derivatives are
  \[
  E_{pp} = 1/p, \quad  E_{pq} = -1/q, \quad E_{qq}=p/q^2.
  \]
  At the point $(p,q)=(1,1)$,
  \[
  E_p|_{(1,1)}=E_q|_{(1,1)}=0,\quad
  E_{pq}|_{(1,1)}=E_{qq}|_{(1,1)}=1,\quad
  E_{pq}|_{(1,1)}=-1.
  \]
  Applying the definition of $p_M$ and $q_M$ shows that when $M$ goes to zero
  \[
  p_M = 1+\sqrt{2M} + \hot \quad
  q_M = 1-\sqrt{2M} + \hot
  \]
  where $\hot$ stands for higher order terms (see Figure \ref{fig:Elower} (left)). Plugging them
  back to $E(p_M ||q_M)$ and using the second order Taylor approximation shows
  \[
  E(p_M || q_M) = 4M + \hot
  \]
  Therefore, when $M$ goes to zero, the ratio $E(p_M||q_M)/M$ goes to 4.

  When $M$ goes to infinity, $p_M$ goes to 2. From the definition, $q_M$ satisfies
  \[
  \ln(1/q_M) - (1-q_M) = M.
  \]
  Therefore, $q_M = e^{-(M+1)}(1+\hot)$ (see Figure \ref{fig:Elower} (right)).  Plugging them back
  to $E(p_M ||q_M)$ shows that
  \[
  E(p_M||q_M) = p_M \ln p_M/q_M - (p_M-q_M) =
  2\ln2  + 2(M+1) - 2 + \hot
  \]
  When $M$ goes to infinity, the ratio $E(p_M||q_M)/M$ goes to 2.
  
  Putting these two cases together proves the statement.
\end{proof}

Next, consider the square $(p,q)\in(0,1)\times(1,2)$.

\begin{theorem}\label{thm:Eupper}
  For any $M>0$, now define $p_M$ and $q_M$ as follows:
  \begin{itemize}
  \item $q_M=\min(2,q')$ where $q'>1$ satisfies $E(1||q')=\ln(1/q') - (1-q')=M$.
  \item $p_M$ is the minimum $p'\ge 0$ with $E(p'||1)=p' \ln p' - (p'-1) \le M$.
  \end{itemize}
  There exists a uniform constant $C>0$ such that for any $M>0$
  \[
  \frac{E(p_M||q_M)}{M}<C.
  \]  
\end{theorem}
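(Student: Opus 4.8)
The plan is to mirror the proof of Theorem~\ref{thm:Elower}, using the same continuity-plus-limits strategy. Since $E(p_M\|q_M)/M$ depends continuously on $M$ and is defined for all $M>0$, it suffices to exhibit finite limits as $M\to 0$ and as $M\to\infty$; a continuous function on $(0,\infty)$ with finite one-sided limits at both ends is bounded, which yields the uniform constant $C$. The geometry is now the reflected configuration: $q_M$ sits above $1$ (capped at $2$) and $p_M$ sits below $1$, so I would keep careful track of the fact that here $p<q$, which means $\ln(p/q)<0$, but $E$ remains nonnegative by convexity.

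For the small-$M$ regime I would again Taylor-expand $E$ about $(p,q)=(1,1)$, reusing the derivative values already computed in the proof of Theorem~\ref{thm:Elower} (namely $E_p=E_q=0$ and the Hessian $E_{pp}=1$, $E_{pq}=-1$, $E_{qq}=1$ at $(1,1)$). The defining equations $E(p_M\|1)=M$ and $E(1\|q_M)=M$ give, to leading order, the symmetric expansions
\[
p_M = 1-\sqrt{2M}+\hot, \qquad q_M = 1+\sqrt{2M}+\hot.
\]
Substituting into the second-order Taylor form of $E$ produces $E(p_M\|q_M)=4M+\hot$ exactly as before, so the ratio tends to $4$. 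The calculation is essentially identical to the first theorem because near $(1,1)$ the divergence is locally symmetric under reflection through the diagonal, so I do not anticipate any difficulty here.

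The large-$M$ regime is where the two theorems genuinely differ, and I expect this to be the main obstacle. Here $q_M$ saturates at $2$ for large $M$ (since $E(1\|q')=\ln(1/q')-(1-q')$ is bounded on $(1,2]$, with maximum value $\ln(1/2)+1=\ln 2-1<1$ at $q'=2$; in fact this means for $M$ larger than that threshold the constraint $E(1\|q')=M$ has no solution below $2$, so $q_M=2$). Meanwhile $p_M$ is the minimal $p'\ge 0$ with $p'\ln p'-(p'-1)\le M$; as $M\to\infty$ this forces $p_M\to 0^+$, and since $E(p'\|1)\to 1$ as $p'\to 0^+$, one finds $p_M$ approaches $0$ with $E(p_M\|1)$ bounded. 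I would then plug $q_M=2$ and $p_M\to 0$ into
\[
E(p_M\|q_M)=p_M\ln(p_M/q_M)-(p_M-q_M)=p_M\ln p_M - p_M\ln 2 - p_M + q_M,
\]
and observe that every term is bounded as $p_M\to 0$: $p_M\ln p_M\to 0$, $p_M\ln 2\to 0$, $p_M\to 0$, and $q_M=2$, so $E(p_M\|q_M)$ tends to the finite limit $2$. Consequently the ratio $E(p_M\|q_M)/M\to 0$ as $M\to\infty$.

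The delicate point to verify carefully is the behavior of $p_M$: I must confirm that $M$ not being achievable by $E(p'\|1)$ for small $p'$ (because $E(\,\cdot\,\|1)$ stays below $1$ on $(0,1)$) is handled correctly by the ``minimum $p'$ with $E\le M$'' definition, which for $M\ge 1$ simply gives $p_M=0$, and for small $M$ gives the branch used above. Once both limits ($4$ at $M\to 0$ and $0$ at $M\to\infty$) are established and the ratio is seen to be continuous and positive on $(0,\infty)$, boundedness by a uniform $C$ follows immediately, completing the proof.
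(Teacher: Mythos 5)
Your proposal is correct and follows essentially the same route as the paper's own proof: continuity of the ratio plus finite limits at both ends, with the second-order Taylor expansion at $(1,1)$ giving $p_M=1-\sqrt{2M}+\hot$, $q_M=1+\sqrt{2M}+\hot$ and the limit $4$ as $M\to 0$, and $p_M\to 0$, $q_M\to 2$, $E(p_M\|q_M)\to 2$ giving the limit $0$ as $M\to\infty$; your extra care about the saturation $q_M=2$ and the definition of $p_M$ via ``minimum $p'$ with $E\le M$'' (so that $p_M=0$ once $M\ge 1$) only makes explicit what the paper leaves implicit. One trivial slip: $E(1\|2)=\ln(1/2)+1=1-\ln 2\approx 0.307$, not $\ln 2-1$, but this does not affect your argument.
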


\begin{figure}[h!]
  \centering
  \includegraphics[scale=0.42]{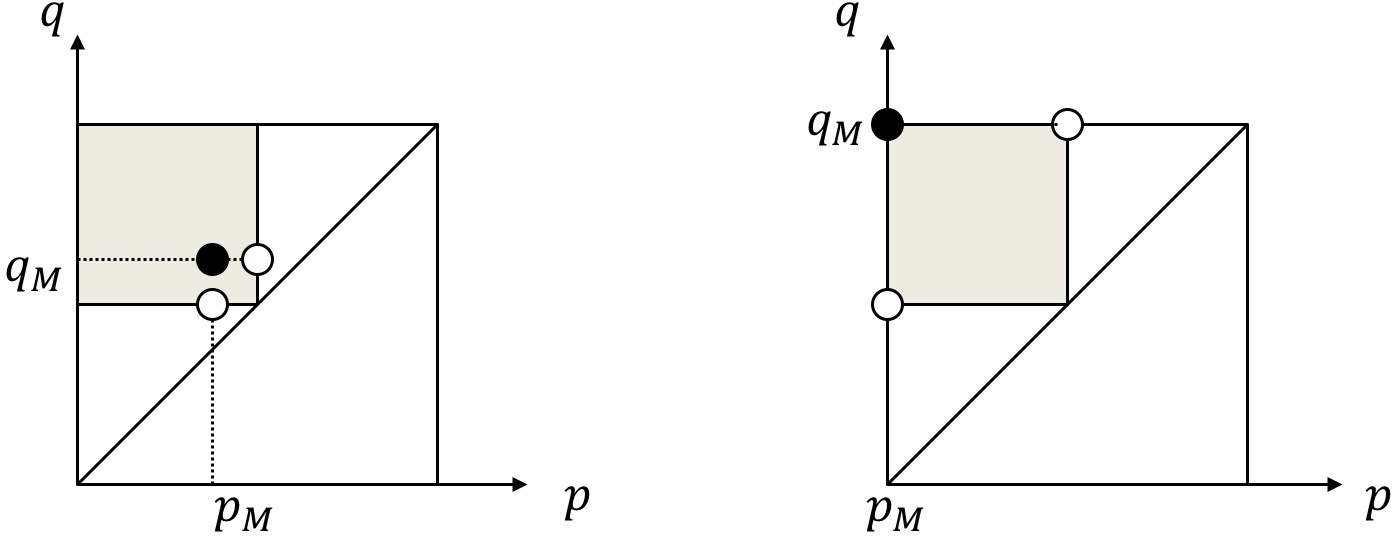}
  \caption{Locations of $p_M$ and $q_M$ for the case $(p,q)\in(0,1)\times(1,2)$.  Left: small
    $M$. Right: large $M$. }
  \label{fig:Eupper}
\end{figure}

\begin{proof}
  Following the proof of the previous theorem, it is sufficient to show that the ratio has a limit
  as $M$ goes to zero and infinity.

  When $M$ goes to zero, one can again use the second order Taylor expansion. Applying the
  definition of $p_M$ and $q_M$, for sufficiently small $M$,
  \[
  p_M = 1-\sqrt{2M} + \hot\quad
  q_M = 1+\sqrt{2M} + \hot
  \]
  (see Figure \ref{fig:Eupper} (left)). Plugging them back to $E(p_M ||q_M)$ and using again the
  Taylor approximation shows
  \[
  E(p_M || q_M) = 4M + \hot
  \]
  Therefore, when $M$ goes to zero, the ratio $E(p_M||q_M)/M$ goes to 4.

  When $M$ goes to infinity, $p_M$ goes to 0 and $q_M$ goes to 2 (see Figure \ref{fig:Eupper}
  (right)). Plugging them back to $E(p_M ||q_M)$ shows that
  \[
  E(p_M||q_M) = 2 + \hot
  \]
  Therefore, when $M$ goes to infinity, the ratio $E(p_M||q_M)/M$ goes to 0.
  
  Putting these two cases together proves the statement.
\end{proof}

\begin{remark}
  Theorems \ref{thm:Elower} and \ref{thm:Eupper} also hold for the dual divergence of $E$ defined as
  \[
  E^*(p||q) = q \ln(q/p) - (q-p)
  \]
  for $0<p,q<\infty$ by simply switching the roles of $p$ and $q$.
\end{remark}

\section{Hierarchical low-rank structure of negative exponential of divergence} \label{sec:lowrank}

\subsection{Divergence $E(p||q)$}\label{sec:E}

Consider now the negative exponential of the divergence $E(p||q)$
\begin{equation}\label{eq:enE}
  \exp(-n E(p||q) = \exp(-n (p\ln(p/q) - (p-q)))
\end{equation}
for $0\le p,q<\infty$ and any $n>0$. 

We consider a hierarchical decomposition that partitions the domain $(p,q)\in (0,\infty)^2$ into
non-overlapping squares in a multiscale way. For each level $\ell$ indexed by integers, introduce the
blocks $B_{\ell,k}$ defined as follows for $k=0,1,\ldots$,
\[
B_{\ell,k} =
\begin{cases}
  [k/2^\ell,(k+1)/2^\ell] \times [(k+1)/2^\ell,(k+2)/2^\ell],& \text{for $k$ even,}\\
  [k/2^\ell,(k+1)/2^\ell] \times [(k-1)/2^\ell,k/2^\ell],    & \text{for $k$ odd}.
\end{cases}
\]
An illustration of this partitioning is shown in Figure \ref{fig:ED} (left).

\begin{figure}[h!]
  \centering
  \includegraphics[scale=0.42]{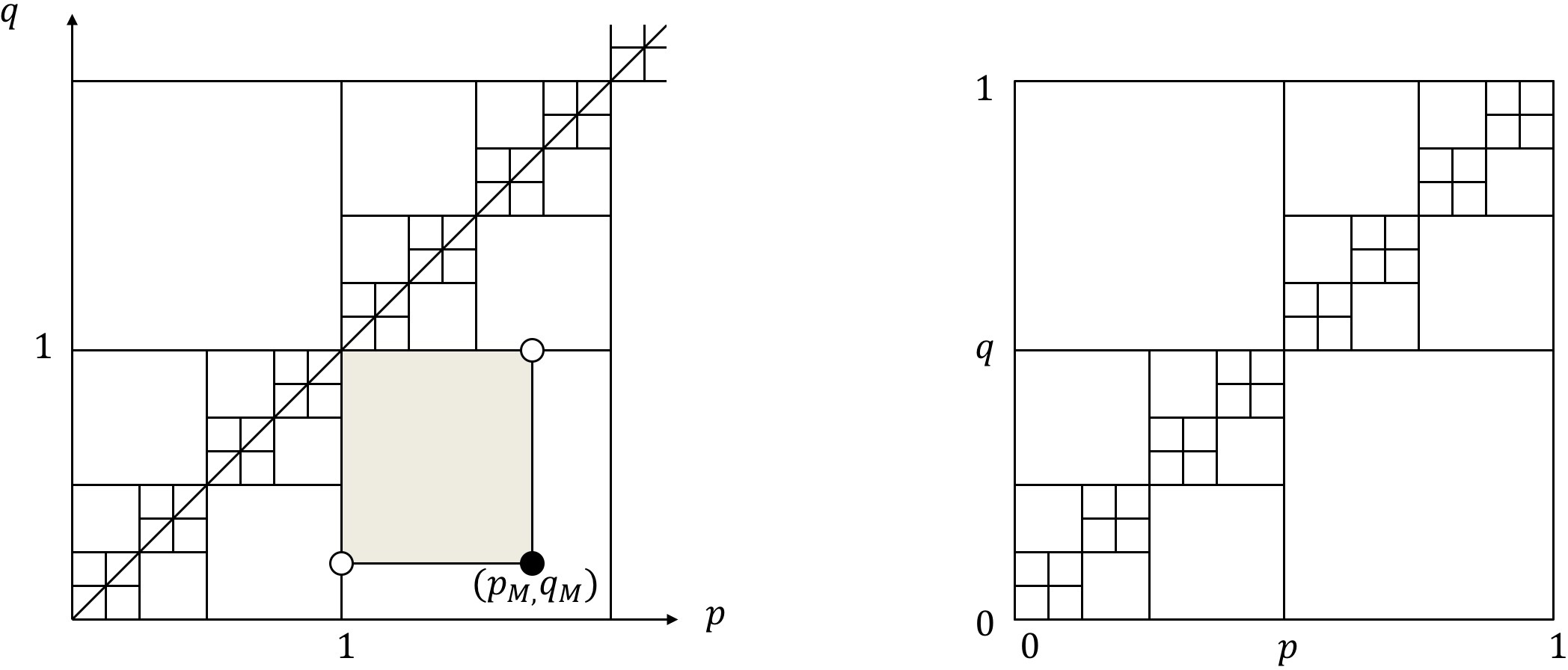}
  \caption{Left: Hierarchical decomposition for $\exp(-n E(p||q))$ for $(p,q)\in (0,\infty)^2$.
    Right: Hierarchical decomposition for $\exp(-n D(p||q))$ for $(p,q)\in (0,1)^2$.
  }
  \label{fig:ED}
\end{figure}

The main goal of this section is to prove the following theorem concerning the numerical rank of
\eqref{eq:enE} restricted to each $B_{\ell,k}$.

\begin{theorem}\label{thm:E}
  For any $\eps>0$, there exists a constant $T_\eps = O(\polylog(1/\eps))$ such that for any $n>0,
  \ell, k$ the restriction of $\exp(-n E(p||q))$ to $B_{\ell,k}$ has an $O(\eps)$-accurate
  $T_\eps$-term separated approximation. More precisely, there exists functions $\{\alpha_i(p)\}$ and
  $\{\beta_i(q)\}$ for $1\le i \le T_\eps$ such that in $B_{\ell,k}$
  \[
  \exp(-n E(p||q)) = \sum_{i=1}^{T_\eps} \alpha_i(p)\beta_i(q) + O(\eps).
  \]
\end{theorem}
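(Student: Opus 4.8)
The plan is to establish the low-rank structure by showing that on each block $B_{\ell,k}$, the function $\exp(-nE(p||q))$ either decays rapidly to a negligible size (so that the zero approximation suffices) or is well-approximated by separating the $p$ and $q$ dependence through a truncated expansion. The key realization is that $B_{\ell,k}$ is a unit-aspect-ratio square of side $2^{-\ell}$ located adjacent to the diagonal, with its nearest corner to the diagonal being the singularity-relevant point. Let me denote the block corners and write $h = 2^{-\ell}$.

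First I would reduce to a normalized setting. Because $E(p||q)$ is homogeneous of degree one under the scaling $(p,q) \mapsto (tp, tq)$ up to the factor $t$ (indeed $E(tp||tq) = t\,E(p||q)$), and since $n$ multiplies $E$, I can absorb both the scale of the block and $n$ into a single effective parameter. Concretely, I would rescale coordinates so that $B_{\ell,k}$ becomes a fixed reference square, at which point the quantity $n E(p||q)$ on the block is governed by $n$ times a scale factor times a divergence on the reference configuration. This collapses the doubly-indexed family $\{B_{\ell,k}\}$ together with the free parameter $n$ into essentially a one-parameter family controlled by an effective magnitude, call it $M$, measuring how large $nE$ is across the block.

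The main step is a case split on $M$. When $M$ is small (the block sits in a region where $nE$ stays bounded), $\exp(-nE)$ is a smooth, slowly-varying function on the square, and I would obtain a separated approximation by a standard Chebyshev or Taylor tensor-product expansion in the rescaled variables; the number of terms needed to reach accuracy $\eps$ is $O(\polylog(1/\eps))$ because the function is analytic with derivative bounds uniform after rescaling. When $M$ is large, this is precisely where Theorem \ref{thm:Elower} (and its dual) enters: those theorems guarantee that the value $E(p_M||q_M)$ at the worst corner is at most a uniform constant times $M$, which means $\exp(-nE)$ does not merely decay but decays uniformly across the entire block so that $\exp(-nE) = O(\eps)$ on all of $B_{\ell,k}$ once $M \gtrsim \log(1/\eps)$. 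In that regime the trivial rank-zero approximation is valid. The geometric definition of $p_M, q_M$ in those theorems is exactly the statement that the corner of the block closest to the diagonal controls the maximum of $\exp(-nE)$, and the relative bound prevents the far corner from being anomalously large relative to the near corner.

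The hard part will be stitching these two regimes together cleanly and making the reduction to the one-parameter quantity $M$ rigorous across all $(\ell,k)$ simultaneously, rather than block by block. In particular I expect the delicate point to be verifying that after rescaling, the function's relevant derivative bounds (needed for the polynomial approximation in the small-$M$ case) are genuinely uniform in $n$, $\ell$, and $k$, so that a single $T_\eps$ works everywhere; this is where the homogeneity of $E$ and the relative bound from Section \ref{sec:rela} must be combined, with the theorems guaranteeing that the transition value of $M$ between the two regimes is itself bounded independently of the block. Once uniformity is secured, taking $T_\eps$ to be the larger of the polynomial-degree count and the constant from the decay estimate yields the claimed $O(\polylog(1/\eps))$ bound.
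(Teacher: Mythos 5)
Your opening reduction is exactly the paper's: by homogeneity $E(tp\|tq)=t\,E(p\|q)$, each block can be enlarged to $(c,2c)\times(0,c)$ and rescaled to the reference square $(1,2)\times(0,1)$ with the scale absorbed into $n$. But after that, your case split on a single block-level magnitude $M$ cannot work. Every block $B_{\ell,k}$ touches the diagonal at the corner $(c,c)$, $c=k/2^\ell$, where $E=0$ and hence $\exp(-nE(p\|q))=1$ for \emph{every} $n$; so your ``large-$M$'' regime, in which the whole block is $O(\eps)$ and rank zero suffices, is empty. And in the complementary ``small-$M$'' regime, the derivative bounds you need for a tensor Chebyshev/Taylor expansion are not uniform in $n$: on the rescaled square $E(p\|q)\approx (p-q)^2/2$ near $(1,1)$, so $\exp(-nE)$ develops a transition layer of width $\sim n^{-1/2}$ at that corner and its gradient grows like $\sqrt{n}$. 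No fixed polynomial degree in $(p,q)$ works for all $n$, so a single $T_\eps$ cannot be extracted this way. You explicitly flagged this uniformity as ``the delicate point'' to be verified --- it is in fact false, which makes this a genuine gap rather than a routine verification.

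The paper resolves this with a split \emph{inside} each rescaled block plus a composition trick, and this is where Theorem \ref{thm:Elower} is actually used (your reading of its role is inverted: it controls the region where the function is \emph{not} negligible, not the decayed region). Set $M=\frac{1}{n}\ln\frac{1}{\eps}$, depending on $n$. Monotonicity of $E$ together with the definitions of $p_M,q_M$ gives $\exp(-nE(p\|q))\le\eps$ on $(1,2)\times(0,1)$ outside the sub-rectangle $[1,p_M]\times[q_M,1]$, where the approximation is taken to be zero (the $\alpha_i,\beta_i$ are extended by zero); the relative bound $E(p_M\|q_M)\le CM$ of Theorem \ref{thm:Elower} then guarantees $nE\le C\ln(1/\eps)$ on the sub-rectangle, \emph{uniformly in $n$}. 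One approximates the one-variable function $\exp(-x)$ on $[0,C\ln(1/\eps)]$ by a polynomial $h_d$ of degree $d=O(\ln(1/\eps))$ and substitutes $x=nE(p\|q)$. Separability then comes not from a bivariate expansion but from the algebraic form $E(p\|q)=(p\ln p-p)+q-p\ln q$, a short separated sum, so expanding $h_d(nE)$ produces $O(\mathrm{poly}(d))=O(\polylog(1/\eps))$ products $\alpha_i(p)\beta_i(q)$, with rank independent of $n$ precisely because the polynomial acts on the scalar $nE$ rather than on $(p,q)$. The even-$k$ (above-diagonal) blocks are handled identically via Theorem \ref{thm:Eupper}. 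If you want to salvage your write-up, replace the block-level dichotomy with this $n$-adapted sub-rectangle construction and the composition-of-polynomial argument.
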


\begin{proof}
  
  Consider first the blocks $B_{\ell,k}$ with $k$ odd. These blocks are below the diagonal
  $p=q$. The top left corner of $B_{\ell,k}$ is $(c,c)$ with $c=k/2^\ell$.  Let us make two key
  observations.
  \begin{itemize}
  \item It is sufficient to prove the theorem for the restriction of $\exp(-n E(p||q))$ to
    $(c,2c)\times(0,c)$ as the later contains $B_{\ell,k}$.
  \item The second observation is that, as the statement is uniform in $n$, it is sufficient to
    scale the box $(c,2c)\times(0,c)$ to $(1,2)\times(0,1)$ by scaling the value of $n$ accordingly.
  \end{itemize}
  Based on these two observations, it is sufficient to consider the box $(1,2)\times(0,1)$ for any
  $\eps>0$ and any $n>0$.

  For fixed $\eps>0$ and $n>0$, define $M=\frac{1}{n}\ln\frac{1}{\eps}$. Applying Theorem
  \ref{thm:Elower} along with the definition of $p_M$ and $q_M$ gives
  \[
  E(p_M||q_M) \le C M = C\frac{1}{n}\ln\frac{1}{\eps}
  \]
  and by monotonicity
  \[
  E(p||q) \le CM = C\frac{1}{n}\ln\frac{1}{\eps}, \quad \forall (p,q) \in [1,p_M]\times[q_M,1].
  \]
  In order to construct a separated approximation of $\exp(-n (p\ln(p/q) - (p-q)))$, we resort to
  the polynomial expansion for $(p,q) \in [1,p_M]\times[q_M,1]$.
  
  In order for this, consider the function $\exp(-x)$ in $x\in[0,L]$ for some $L>0$. Using the
  Lagrange interpolation at the Chebyshev grids in $[0,L]$ and the uniform bound of the derivatives
  of $\exp(-x)$ (see for example Theorem 8.7 of \cite{suli2003introduction}), we know that there
  exists a degree $d=O(\ln L + \ln(1/\eps))$ polynomial $h_d(x)$ such that
  \[
  \exp(-x) - h_d(x) = O(\eps).
  \]
  Plugging $x=nE(p,q)$ for $(p,q) \in [1,p_M]\times[q_M,1]$ with the bound $L=n\cdot
  C\frac{1}{n}\ln\frac{1}{\eps} = C\ln(1/\eps)$, one arrives at
  \[
  \exp(-nE(p||q)) - h_d(nE(p||q)) = O(\eps),
  \]
  with $d=O(\ln(1/\eps))$. As $E(p||q) = p \ln p - p\ln q -p + q$, by expanding the polynomial
  $h_d(\cdot)$, we obtain a $O(\polylog(1/\eps))$-term separated approximation to $\exp(-nE(p||q))$
  for $(p,q)\in[1,p_M]\times[q_M,1]$. The individual terms define the functions $\{\alpha_i(p)\}$
  for $p\in[1,p_M]$ and $\{\beta_i(q)\}$ for $q\in[q_M,1]$, respectively.

  For any point $(p,q)\in (1,2)\times(0,1)$ but outside $[1,p_M]\times[q_M,1]$, as
  $\exp(-nE(p||q))\le \eps$, one can simply approximate it by zero without introducing an error
  larger than $\eps$. In terms of the functions $\alpha_i(p)$ and $\beta_i(q)$, we simply define
  $\alpha_i(p)$ to be zero in $[p_M,2]$ and $\beta_i(q)$ to be zero in $q\in[0,q_M]$, respectively.
  
  Next, we consider the blocks $B_{\ell,k}$ with $k$ even. These are the blocks above the diagonal
  $p=q$. The above argument goes through except that Theorem \ref{thm:Eupper} is invoked.
  
\end{proof}

\begin{remark}
  The same theorem is true for 
  \[
  \exp(-n E^*(p||q)) \equiv \exp(-n (q\ln(q/p) - (q-p))),
  \]
  for $0<p,q<\infty$ by switching the roles of $p$ and $q$.
\end{remark}

\begin{remark}
  The same theorem is true for
  \[
  \exp(-n E(1-p||1-q))
  \]
  for $-\infty<p,q<1$ with a similar hierarchical partitioning of the domain $-\infty<p,q<1$.
\end{remark}

\subsection{Kullback-Leibler divergence}\label{sec:KL}

The Kullback-Leibler (KL) divergence of two Bernoulli distributions with parameters $p,q\in[0,1]$ is
defined as
\[
D(p||q) \equiv p\ln (p/q) + (1-p)\ln ((1-p)/(1-q)).
\]
This section proves the hierarchical low-rank property for
\[
\exp(-n D(p||q) = \exp(-n (p\ln(p/q) + (1-p)\ln ((1-p)/(1-q)) ))
\]
with $0<p,q<1$. For the domain $(p,q)\in [0,1]\times[0,1]$, the hierarchical decomposition needs to
be restricted to
\[
\ell\ge 1, \quad
k = 0,1,\ldots, 2^\ell-1.
\]
An illustration of this partitioning is shown in Figure \ref{fig:ED} (right).

\begin{theorem}\label{thm:KL}
  For any $\eps>0$, there exists a constant $S_\eps = O(\polylog(1/\eps))$ such that for any $n > 0,
  \ell\ge 1, k = 0,1,\ldots, 2^\ell-1$, the restriction of $\exp(-n D(p||q))$ to $B_{\ell,k}$ has an
  $O(\eps)$-accurate $S_\eps$-term separated approximation. More precisely, there exists functions
  $\{\alpha_i(p)\}$ and $\{\beta_i(q)\}$ for $1\le i \le S_\eps$ such that in $B_{\ell,k}$
  \[
  \exp(-n D(p||q)) = \sum_{i=1}^{S_\eps} \alpha_i(p)\beta_i(q) + O(\eps).
  \]
\end{theorem}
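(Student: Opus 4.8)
The plan is to reduce Theorem \ref{thm:KL} to the already-established Theorem \ref{thm:E} by exploiting the structure of the KL divergence. The key observation is that the Bernoulli KL divergence decomposes as a sum
\[
D(p\|q) = E(p\|q) + E(1-p\|1-q),
\]
since $E(p\|q) = p\ln(p/q) - (p-q)$ and $E(1-p\|1-q) = (1-p)\ln((1-p)/(1-q)) - ((1-p)-(1-q))$, and the two linear correction terms $-(p-q)$ and $-((1-p)-(1-q)) = (p-q)$ cancel exactly. Consequently the negative exponential factorizes multiplicatively:
\[
\exp(-n D(p\|q)) = \exp(-n E(p\|q)) \cdot \exp(-n E(1-p\|1-q)).
\]

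The next step is to apply a separated approximation to each factor. Theorem \ref{thm:E} directly provides an $O(\eps)$-accurate $T_\eps$-term separated approximation of $\exp(-nE(p\|q))$ on each block $B_{\ell,k}$. For the second factor $\exp(-nE(1-p\|1-q))$, I would invoke the second remark following Theorem \ref{thm:E}, which asserts the same rank bound for $\exp(-nE(1-p\|1-q))$ under the reflected hierarchical partitioning of the domain. Under the change of variables $p \mapsto 1-p$, $q \mapsto 1-q$, the blocks $B_{\ell,k}$ restricted to $[0,1]^2$ with $\ell \ge 1$, $k = 0,\ldots,2^\ell-1$ map to an analogous admissible family of blocks, so each block carries a separated approximation of the second factor as well. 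Write these as $\sum_i \alpha_i(p)\beta_i(q)$ and $\sum_j \gamma_j(p)\delta_j(q)$ respectively.

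I would then multiply the two separated approximations. The product of a $T_\eps$-term expansion and a $T_\eps$-term expansion is a separated expansion with at most $T_\eps^2$ terms, each of the form $\alpha_i(p)\gamma_j(p)\cdot\beta_i(q)\delta_j(q)$, which defines new functions of $p$ and $q$ separately. Since both factors are bounded by $1$ on the domain (being negative exponentials of nonnegative divergences), the error from multiplying two $O(\eps)$-accurate approximations remains $O(\eps)$: if $F = \tilde F + O(\eps)$ and $G = \tilde G + O(\eps)$ with $|F|,|G| \le 1$ and $|\tilde F|, |\tilde G|$ bounded, then $FG = \tilde F \tilde G + O(\eps)$. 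Setting $S_\eps = T_\eps^2 = O(\polylog(1/\eps))$ yields the claimed bound, since the square of a poly-logarithmic quantity is again poly-logarithmic.

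The main obstacle I anticipate is the geometric bookkeeping in the second step: verifying that the reflected partitioning used in the remark for $\exp(-nE(1-p\|1-q))$ is compatible with the \emph{same} blocks $B_{\ell,k}$ on which Theorem \ref{thm:E} is applied. The reflection $p\mapsto 1-p$ sends a block adjacent to the diagonal near the lower-left corner to one adjacent to the diagonal near the upper-right corner, so one must check that each $B_{\ell,k}$ with $\ell\ge 1$, $0\le k\le 2^\ell-1$ lies within an admissible region for \emph{both} factors simultaneously—this is precisely why the decomposition is restricted to $\ell \ge 1$ and $k \le 2^\ell - 1$, keeping the blocks inside $[0,1]^2$ where both reflected and unreflected divergence bounds apply. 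Once this compatibility is confirmed, the rest is the routine multiplication and error estimate sketched above.
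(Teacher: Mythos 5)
Your proposal is correct and follows essentially the same route as the paper: the decomposition $D(p\|q)=E(p\|q)+E(1-p\|1-q)$, an application of Theorem \ref{thm:E} and the remark after it to each exponential factor, and multiplication of the two $T_\eps$-term expansions with the boundedness of each factor controlling the error, giving $S_\eps=T_\eps^2$. Your extra care about the compatibility of the reflected partitioning with the blocks $B_{\ell,k}$ for $\ell\ge 1$, $0\le k\le 2^\ell-1$ is a sound elaboration of a point the paper treats implicitly.
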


\begin{proof}
  The proof is based on a simple observation: $D(p||q) = E(p||q) + E(1-p||1-q)$, which implies \[
  \exp(-nD(p||q)) = \exp(-nE(p||q))\exp(-nE(1-p||1-q)).
  \]
  From Theorem \ref{thm:E} and the remarks right after, the following two estimates hold
  for each $B_{\ell,k}$.
  \begin{align*}
    \exp(-n E(p||q)) &= \sum_{i=1}^{T_\eps} \alpha_i(p)\beta_i(q) + O(\eps),\\ \exp(-n E(1-p||1-q))
    &= \sum_{j=1}^{T_\eps} \alpha'_j(p)\beta'_j(q) + O(\eps),
  \end{align*}
  Taking the product of these two expansions and using the fact that each expansion is bounded by
  $1+O(\eps)$ results in
  \[
  \exp(-nD(p||q)) =  \sum_{i,j=1}^{T_\eps} (\alpha_i(p)\alpha'_j(p)) (\beta_i(q) \beta'_j(q)) + O(\eps).
  \]
  Noticing that $T_\eps^2$ is still of order $O(\polylog(1/\eps))$, setting $S_\eps = T_\eps^2$
  completes the proof.
\end{proof}

\section{Parameterized distributions} \label{sec:dist}

In this section, we apply the theorems in Section \ref{sec:lowrank} to demonstrate the hierarchical
low-rank property for three commonly-encountered distribution families.

\subsection{Binomial distribution}

The binomial distribution with parameter $q\in[0,1]$ and $n$ trials is
\[
f(k,q) = {n \choose k} q^k (1-q)^{n-k},
\]
for $k\in\{0,\ldots,n\}$. By introducing $p=k/n$, we can rewrite the binomial distribution in the
form
\[
f(p,q) = {n\choose np} q^{np} (1-q)^{n-np}
\]
with $p=0,\frac{1}{n},\ldots,1$. Applying the Stirling formula to the factorials results in
\[
f(p,q) = c_{n,p} \frac{q^{np} (1-q)^{n-np}}{p^{np}(1-p)^{n-np}}
=c_{n,p} \exp\left(-n\left(p\ln\pq + (1-p)\ln\wpwq\right)\right),
\]
where $c_{n,p} \approx \frac{1}{\sqrt{2\pi n}} \frac{1}{\sqrt{p(1-p)}}$ except at $p=0$ and $p=1$.

Applying Theorem \ref{thm:KL} to this case shows that the binomial distribution $f(p,q)$ for
$p=0,1/n,\ldots,1$ and $q\in[0,1]$ has the hierarchical low-rank property. Here the two points $p=0$
and $1$ can be treated separately without affecting the rank estimates. Figure \ref{fig:binores}
plots the numerical rank of different blocks for a specific choice of $n$ and $\eps$ (left) and its
dependence on $\eps$ (right). Note that the rank is bounded by $10$ even for $\eps=10^{-9}$ and the
dependence of the rank on $\ln(1/\eps)$ is linear.

\begin{figure}[h!]
  \centering
  \includegraphics[width=0.45\textwidth]{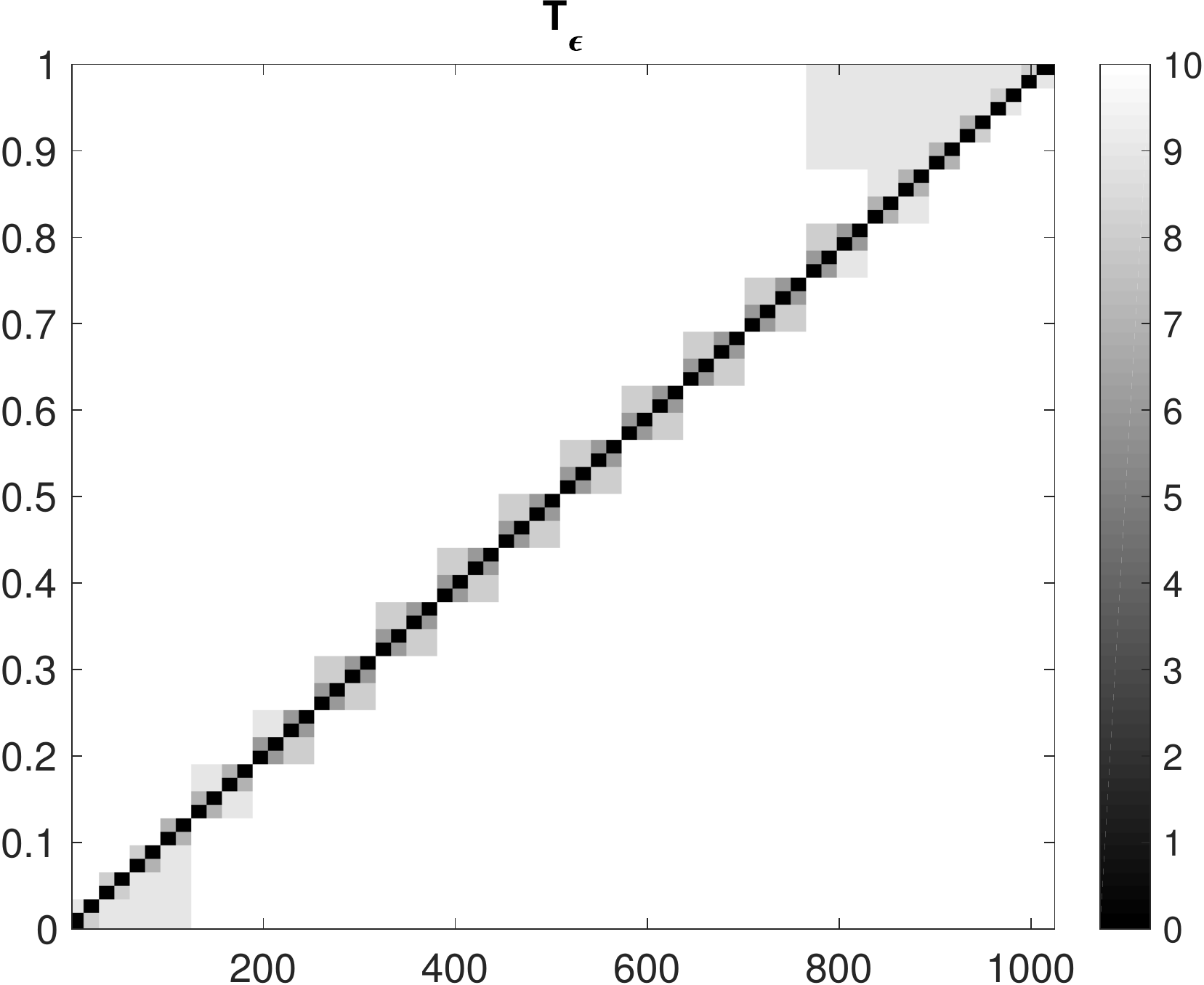}
  \includegraphics[width=0.45\textwidth]{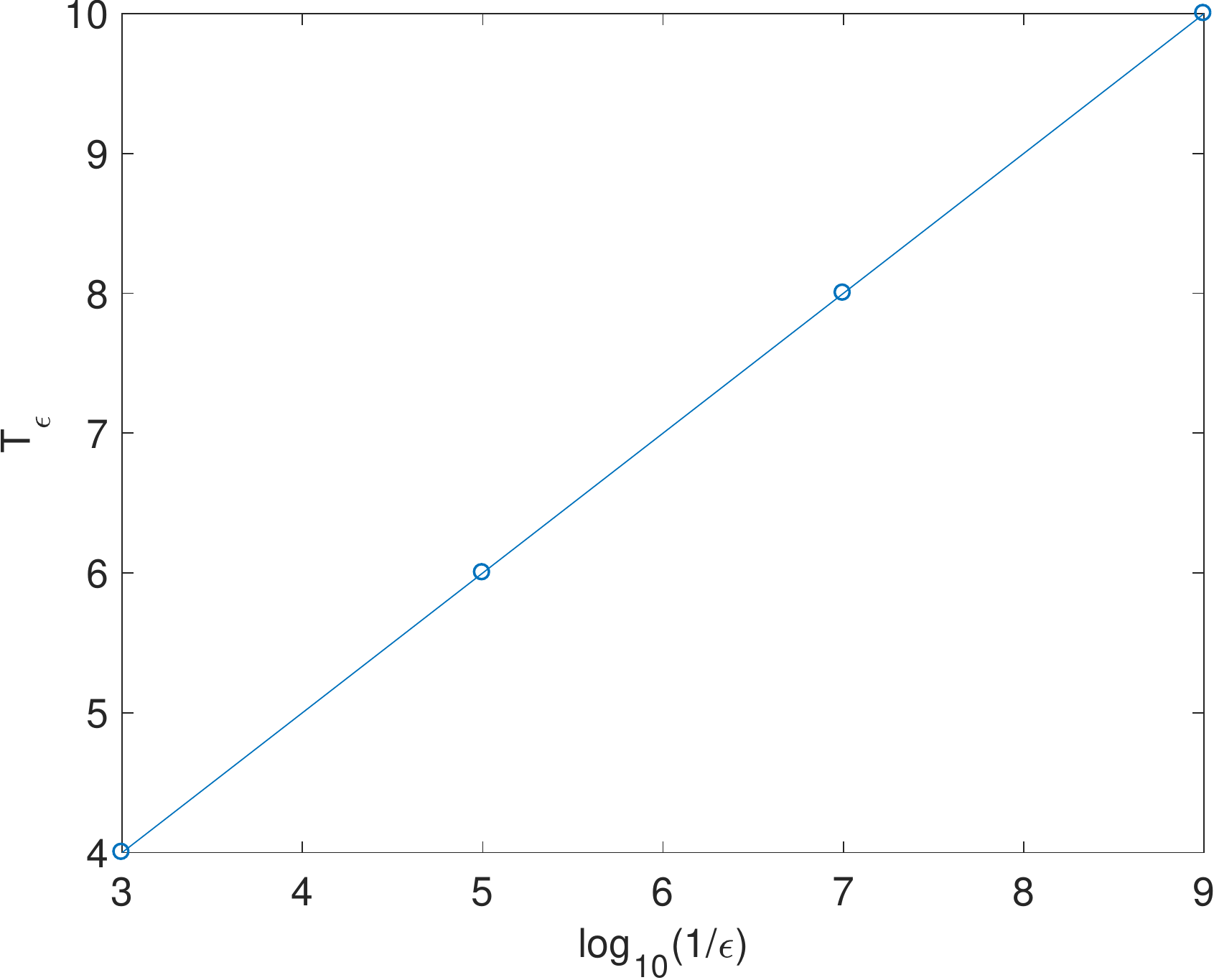}
  \caption{Binomial distribution. Left: the numerical rank $T_\eps$ of different blocks with
    $n=2^{10}$ and $\eps=10^{-9}$.  Right: the maximum of the numerical ranks $T_\eps$ as a function
    of $\eps$ with $n=2^{10}$.  }
  \label{fig:binores}
\end{figure}

\subsection{Poisson distribution}

The Poisson distribution with parameter $\lambda>0$ is
\[
f(k,\lambda) = e^{-\lambda} \frac{\lambda^k}{k!}
\]
for $k\in\{0,1,\ldots\}$. Applying the Stirling formula to $k!$ gives for $k>0$
\[
f(k,\lambda) \approx \frac{1}{\sqrt{2\pi k}} \exp(-(k\log(k/\lambda) - (k-\lambda))).
\]
By identifying $p=k$ and $q=\lambda$, this is the negative exponential of the divergence $E(p||q)$
with $n=1$ in Section \ref{sec:E}, modulus the term $\frac{1}{\sqrt{2\pi k}}$.

Applying Theorem \ref{thm:E} shows that the Poisson distribution $f(k,\lambda)$ for $k=0,1,\ldots$
and $\lambda>0$ exhibits the hierarchical low-rank property. Figure \ref{fig:poissonres} shows the
numerical rank of different blocks for a specific choice of $\eps$ (left) and its dependence on
$\eps$ (right). Note that the rank is bounded by $10$ even for $\eps=10^{-9}$ and there is strong
evidence that the dependence of the rank on $\ln(1/\eps)$ is linear.

\begin{figure}[h!]
  \centering
  \includegraphics[width=0.45\textwidth]{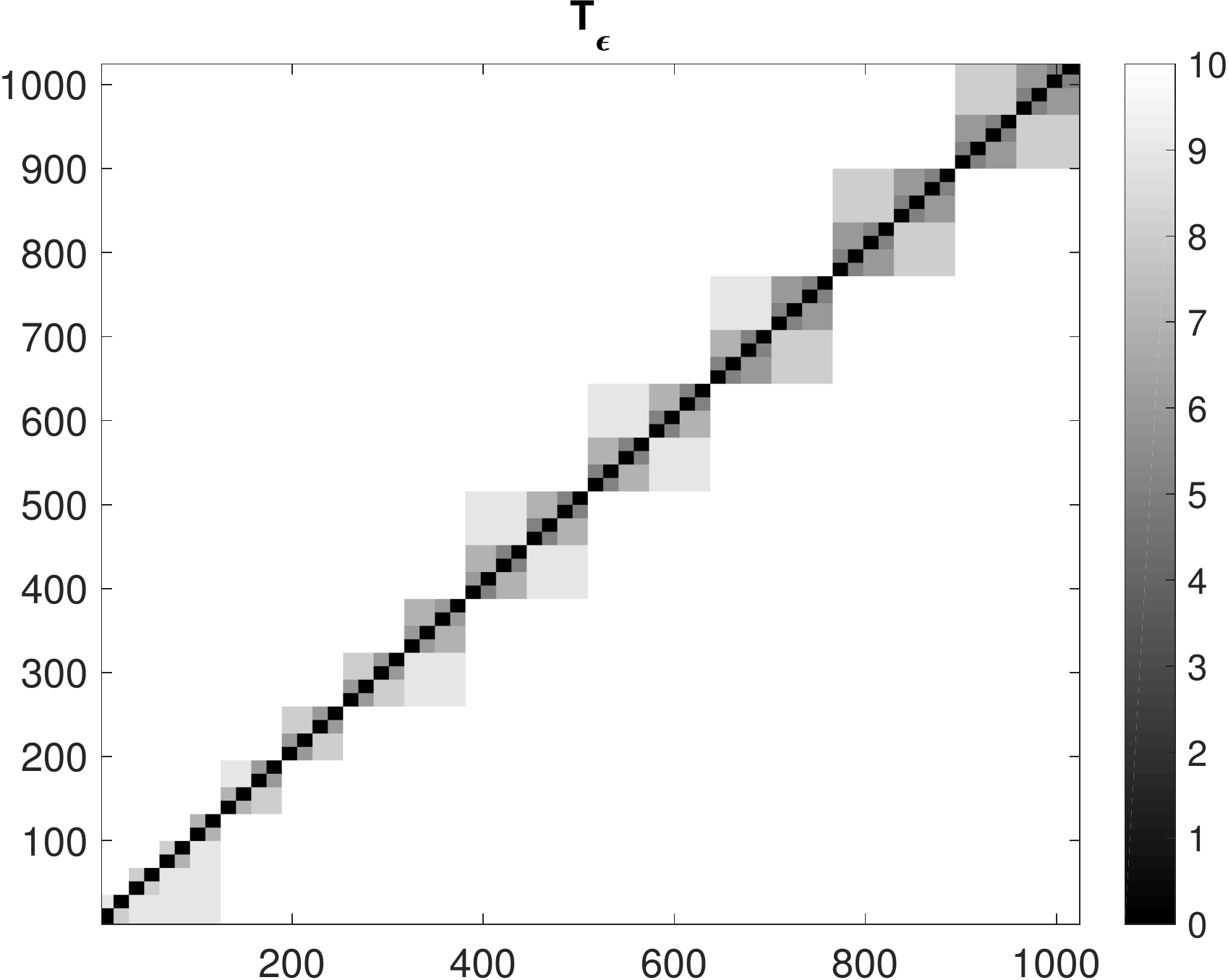}
  \includegraphics[width=0.45\textwidth]{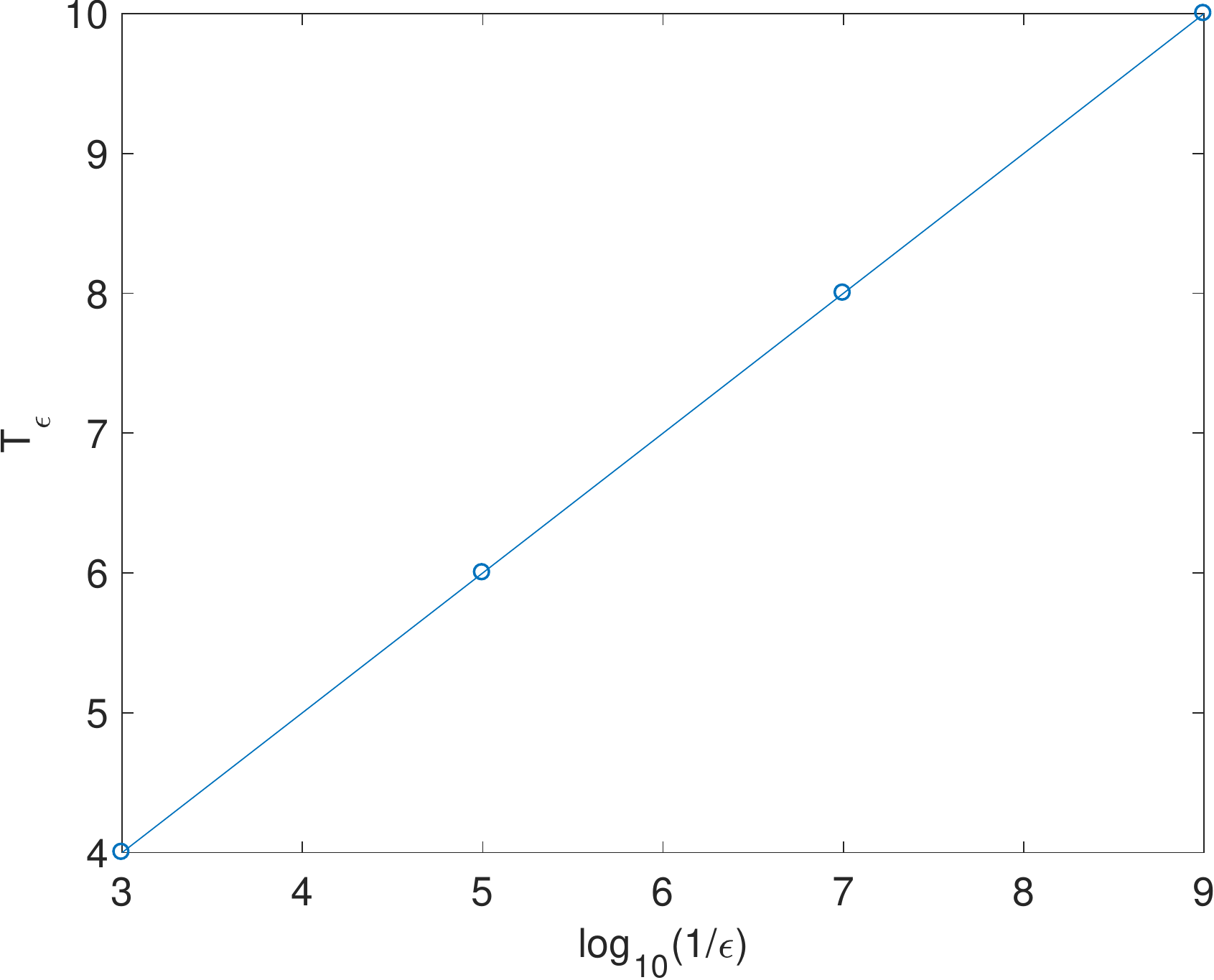}
  \caption{Poisson distribution. Left: numerical rank $T_\eps$ of different blocks with
    $k,\lambda\le 2^{10}$ and $\eps=10^{-9}$.  Right: the maximum of the numerical ranks $T_\eps$ as
    a function of $\eps$.}
  \label{fig:poissonres}
\end{figure}

\subsection{$\chi^2$ distribution}

The $\chi^2$ distribution, parameterized by integer $k\ge 1$ is 
\[
f(x,k) = \frac{1}{2^{k/2}\Gamma(k/2)} x^{k/2-1} e^{-x/2}.
\]
for $x>0$. Applying again the Stirling formula shows that
\[
f(x,k) \approx \frac{1}{2\sqrt{2\pi\left(\frac{k}{2}-1\right)}}
\exp\left(-\left(\frac{x}{2}-\left(\frac{k}{2}-1\right) +
\left(\frac{k}{2}-1\right)\ln\left(\frac{ k/2-1}{x/2}\right) \right)\right).
\]
By identifying $x/2=p$ and $k/2-1=q$, this is
\[
\exp(-(q\ln(q/p)-(q-p)))
\]
modulus the factor $\frac{1}{2\sqrt{2\pi(k/2-1)}}$.

Applying the remark after Theorem \ref{thm:E} shows that the $\chi^2$ distribution exhibits the
hierarchical low-rank property. Figure \ref{fig:chi2res} plots the numerical rank of different
blocks for a specific choice of $\eps$ (left) and its dependence on $\eps$. Again, the rank remains
small even for $\eps=10^{-9}$ and the dependence of the rank on $\ln(1/\eps)$ is linear.

\begin{figure}[h!]
  \centering
  \includegraphics[width=0.45\textwidth]{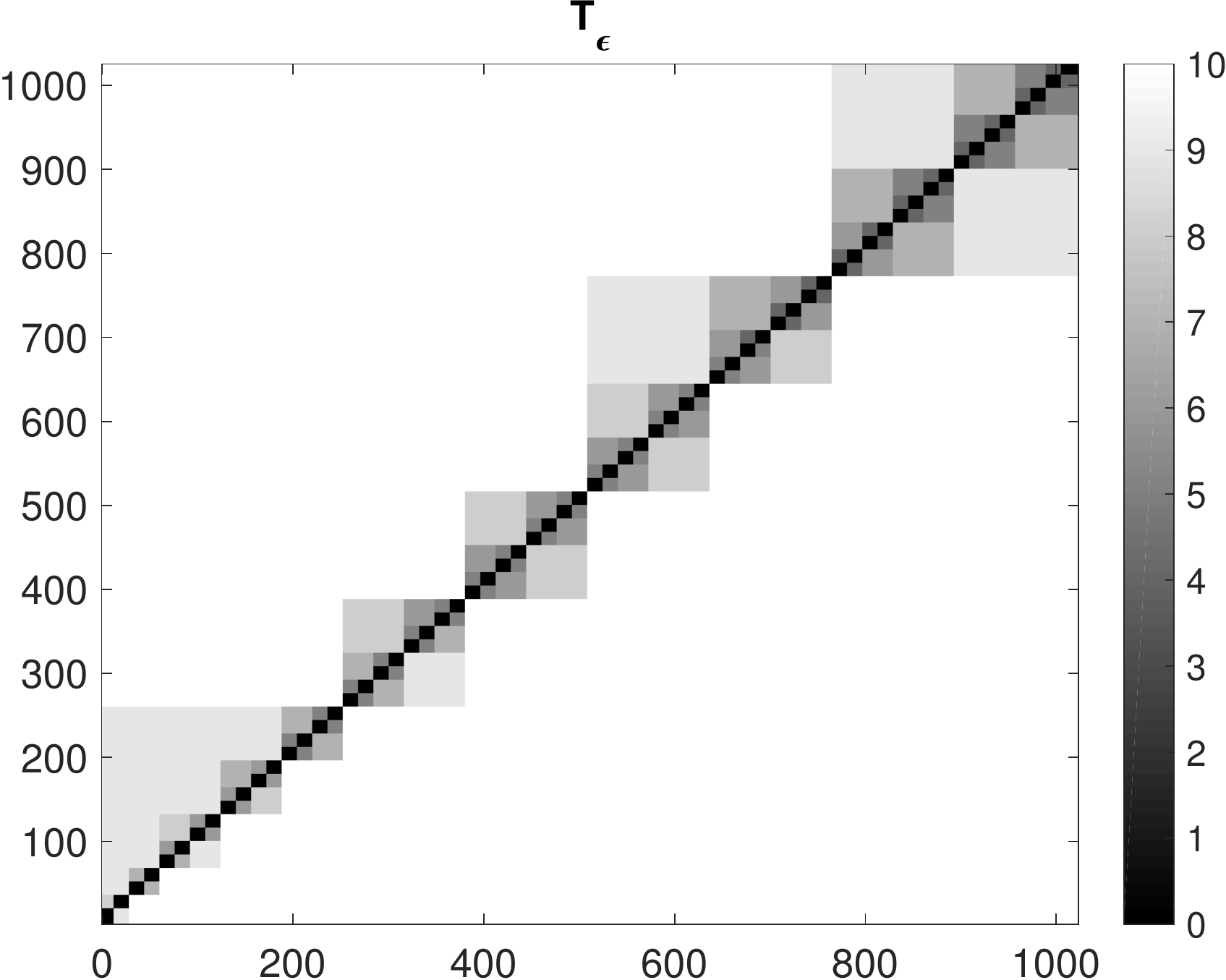}
  \includegraphics[width=0.45\textwidth]{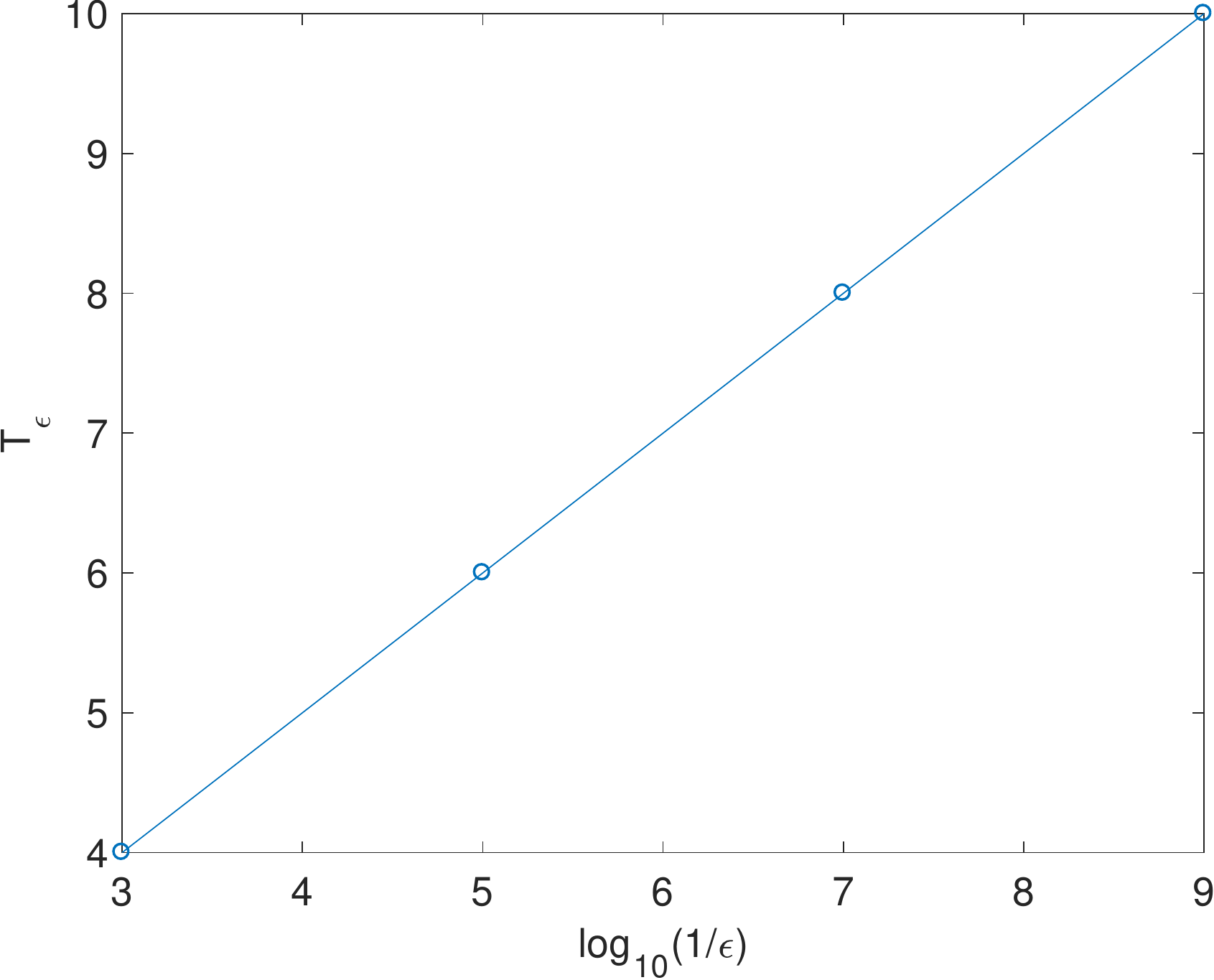}
  \caption{$\chi^2$ distribution. Left: numerical rank $T_\eps$ of different blocks with
    $x,k\lesssim 2^{10}$ and $\eps=10^{-9}$.  Right: the maximum of the numerical ranks $T_\eps$ as
    a function of $\eps$.  }
  \label{fig:chi2res}
\end{figure}

\section{Discussions}

The hierarchical low-rank property has significant numerical implications for these distribution
families. Naive approaches for representing the matrix form of these distributions would require
$O(n^2)$ numbers. Even by thresholding small entries, it would still need at least $O(n^{3/2})$
storage space for most of these distributions. The hierarchical low-rank property proved here allows
for storing the matrix with no more than $O(n\log n \polylog(1/\eps))$ entries. By combining the
low-rank property with thresholding, this can potentially be brought down to $O(n\polylog(1/\eps))$.

The theorems proved here show an $O(\polylog(1/\eps))$ upper bound for the numerical ranks. However,
the numerical results suggest that the actual dependence on $\log(1/\eps)$ seems to be linear. An
immediate direction for future work is to obtain sharper bounds for the rank growth.

\bibliographystyle{abbrv}

\bibliography{ref}

@book{gelman2013bayesian,
  title={Bayesian data analysis},
  author={Gelman, Andrew and Carlin, John B and Stern, Hal S and Dunson, David B and Vehtari, Aki and Rubin, Donald B},
  year={2013},
  publisher={Chapman and Hall/CRC}
}

@book{wasserman2013all,
  title={All of statistics: a concise course in statistical inference},
  author={Wasserman, Larry},
  year={2013},
  publisher={Springer Science \& Business Media}
}

@book{ghosal2017fundamentals,
  title={Fundamentals of nonparametric Bayesian inference},
  author={Ghosal, Subhashis and Van der Vaart, Aad},
  volume={44},
  year={2017},
  publisher={Cambridge University Press}
}

@inproceedings{valiant2013estimating,
  title={Estimating the unseen: improved estimators for entropy and other properties},
  author={Valiant, Paul and Valiant, Gregory},
  booktitle={Advances in Neural Information Processing Systems},
  pages={2157--2165},
  year={2013}
}

@inproceedings{tian2017learning,
  title={Learning populations of parameters},
  author={Tian, Kevin and Kong, Weihao and Valiant, Gregory},
  booktitle={Advances in Neural Information Processing Systems},
  pages={5778--5787},
  year={2017}
}

@article{jiao2015minimax,
  title={Minimax estimation of functionals of discrete distributions},
  author={Jiao, Jiantao and Venkat, Kartik and Han, Yanjun and Weissman, Tsachy},
  journal={IEEE Transactions on Information Theory},
  volume={61},
  number={5},
  pages={2835--2885},
  year={2015},
  publisher={IEEE}
}

@article{paninski2003estimation,
  title={Estimation of entropy and mutual information},
  author={Paninski, Liam},
  journal={Neural computation},
  volume={15},
  number={6},
  pages={1191--1253},
  year={2003},
  publisher={MIT Press}
}

@article{wu2016minimax,
  title={Minimax rates of entropy estimation on large alphabets via best polynomial approximation},
  author={Wu, Yihong and Yang, Pengkun},
  journal={IEEE Transactions on Information Theory},
  volume={62},
  number={6},
  pages={3702--3720},
  year={2016},
  publisher={IEEE}
}

@article{rokhlin1985rapid,
  title={Rapid solution of integral equations of classical potential theory},
  author={Rokhlin, Vladimir},
  journal={Journal of computational physics},
  volume={60},
  number={2},
  pages={187--207},
  year={1985},
  publisher={Elsevier}
}

@misc{greengard1988rapid,
  title={The Rapid Evaluation of Potential Fields in Particle Systems, ACM Distinguished Dissertation 1987},
  author={Greengard, Leslie F},
  year={1988},
  publisher={MIT Press}
}

@article{greengard1991fast,
  title={The fast Gauss transform},
  author={Greengard, Leslie and Strain, John},
  journal={SIAM Journal on Scientific and Statistical Computing},
  volume={12},
  number={1},
  pages={79--94},
  year={1991},
  publisher={SIAM}
}

@article{bebendorf2003existence,
  title={Existence of ℋ-matrix approximants to the inverse FE-matrix of elliptic operators with L∞-coefficients},
  author={Bebendorf, Mario and Hackbusch, Wolfgang},
  journal={Numerische Mathematik},
  volume={95},
  number={1},
  pages={1--28},
  year={2003},
  publisher={Springer}
}

@book{hackbusch2015hierarchical,
  title={Hierarchical matrices: algorithms and analysis},
  author={Hackbusch, Wolfgang},
  volume={49},
  year={2015},
  publisher={Springer}
}

@book{suli2003introduction,
  title={An introduction to numerical analysis},
  author={S{\"u}li, Endre and Mayers, David F},
  year={2003},
  publisher={Cambridge university press}
}

\end{document}